\newtheorem{thm}{Theorem}
\newtheorem{lem}{Lemma}
\newcommand{\B}{{\mathcal B}}
\newcommand{\F}{{\mathbb F}}
\newcommand{\LS}{{\mathcal L}}
\newcommand{\qbinom}[3]{\genfrac{[}{]}{0pt}{}{#1}{#2}_{#3}}
\newcommand{\subspaces}[2]{\genfrac{[}{]}{0pt}{}{#1}{#2}}
\renewcommand{\mod}{\Modulo}
\DeclareMathOperator\GL{GL}
\DeclareMathOperator\PG{PG}
\DeclareMathOperator\Modulo{mod}
\begin{document}


\title{Large Sets of $t$-Designs over Finite Fields}
\author{\sc Michael Braun\\
University of Applied Sciences Darmstadt, Germany\\[1ex]
\sc Axel Kohnert\\
University of Bayreuth, Germany\\[1ex]
\sc Patric \"Osterg\aa rd\\
Aalto University, Finland\\[1ex]
\sc Alfred Wassermann\\
University of Bayreuth, Germany}
\maketitle

\begin{abstract}
A $t\text{-}(n,k,\lambda;q)$-design is a set of $k$-subspaces, called blocks, of an $n$-dimensional vector space $V$ over the finite field with $q$ elements such that each $t$-subspace is contained in exactly $\lambda$ blocks. A partition of the complete set of $k$-subspaces of $V$ into disjoint $t\text{-}(n,k,\lambda;q)$ designs is called a large set of $t$-designs over finite fields. In this paper we give the first nontrivial construction of such a large set with $t\ge2$.
\end{abstract}

\newpage


\section{Introduction}
\label{sec:intro}

A simple $t$-\emph{design over a finite field} or, more precisely, a $t\text{-}(n,k,\lambda;q)$ \emph{design} is a set 
$\B$ of $k$-subspaces of an $n$-dimensional vector space $V$ over the finite field $\F_q$ such that each $t$-subspace of $V$ is contained in exactly $\lambda$ members of $\B$. 

The study of combinatorial $t$-designs and Steiner systems on (finite) sets goes back to the 19th century 
and has a rich literature \cite{Handbook2007}.
Cameron~\cite{Cameron1,Cameron2} and Delsarte~\cite{Delsarte} extended 
the notions of $t$-designs and Steiner systems from sets to vector spaces over finite fields in the early 1970s. 
Recently, designs over finite fields gained a lot of interest because of applications for
error-correction in networks \cite{KK08a}.
    
In 1987, Thomas \cite{Tho87} constructed the first nontrivial simple $t$-designs over finite fields
for $t=2$.
Since then, more designs over finite fields have been constructed, see
\cite{BKL05,BEO+13,Ito98,MMY95,Suz90,Suz92}. 
Specifically, in \cite{BKL05} the first nontrivial $t$-design over finite fields with $t=3$  has been found
and in \cite{BEO+13} $2\text{-}(13,3,1;2)$ designs have been constructed. The latter ones are the first nontrivial $t$-designs 
over finite fields with $\lambda=1$ and $t=2$. Designs with $\lambda=1$ are called $q$-Steiner systems.

An $LS_q[N](t,k,n)$ \emph{large set} $\LS$ is a set of $N$ disjoint $t\text{-}(n,k,\lambda;q)$ designs 
such that their union forms the complete set of all $k$-subspaces of $V=\F_q^n$. 
Large sets of designs over finite fields have been studied for the first time by Ray-Chaudhuri and Schram \cite{RS94}.
There, the authors used non-simple designs.
In this paper we investigate the existence of large sets of \emph{simple} $t$-designs over finite fields.

In the case of designs on sets, large sets are intensively studied objects \cite[Section II.4.4]{KL07}.
A celebrated result by Teirlinck \cite{Teir89} is that large sets of designs on sets exist for all $t>0$ and $N>0$.



Large sets of certain $t\text{-}(n,k,\lambda;q)$ designs have been intensively studied in the framework of projective geometry. 
In geometry, $1\text{-}(n,k,1;q)$ designs are known as $(k-1)$-\emph{spreads} in $\PG(n-1,q)$.
A large set of $1\text{-}(n,k,1;q)$ designs is called $(k-1)$-\emph{parallelism} of the projective geometry $\PG(n-1,q)$. 
A \emph{parallelism} is a $1$-parallelism, i.\,e. $k=2$.

Since $1\text{-}(n,k,1;q)$ designs exist if and only if $k$ divides $n$, a necessary condition for the existence 
of a parallelism in $\PG(n-1,q)$ is that $n$ must be even. Beutelspacher \cite{Beu74} proved the existence of a 
parallelism in $\PG(2^i-1,q)$ for all $i\ge 2$. Later, Baker \cite{Bak76} and Wettl \cite{Wet91} gave a 
construction of parallelisms in $\PG(n-1,q)$ for $n$ even. Penttila and Williams \cite{PW98} studied 
$\PG(3,q)$ for $q \equiv 2 \mod 3$ and constructed parallelisms subsuming the results presented in \cite{Lun84}. 

Up to now, no large sets of $t$-designs over finite fields with $t\geq 2$ have been reported.
The main result of this paper is the following one:

\begin{thm}\label{thm:main}
Nontrivial large sets of $t$-designs over finite fields exist for $t\ge 2$.
\end{thm}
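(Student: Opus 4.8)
The plan is to combine a single explicit example at strength $t=2$ with a recursion that raises the strength by one, so that finitely many applications reach every $t\ge 2$.

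\emph{The base example.} I would construct an $LS_2[3](2,3,8)$: a partition of the $\qbinom{8}{3}{2}$ three-subspaces of $\F_2^{8}$ into three $2\text{-}(8,3,21;2)$ designs, where $21=\qbinom{6}{1}{2}/3$ and $N=3$ is the smallest admissible value since $2\nmid 2^{6}-1$. A direct search being infeasible, I would prescribe a group of automorphisms: fix a subgroup $G\le\GL(8,2)$ inside the normalizer of a Singer cycle and look only for $G$-invariant large sets. With $\mathcal O_{1},\dots,\mathcal O_{s}$ the $G$-orbits on $3$-subspaces and $\mathcal P_{1},\dots,\mathcal P_{r}$ the $G$-orbits on $2$-subspaces, form the condensed Kramer--Mesner matrix $A=(a_{ij})$, $a_{ij}=\#\{U\in\mathcal O_{j}:P\subseteq U\}$ for a fixed $P\in\mathcal P_{i}$; a $G$-invariant $LS_2[3](2,3,8)$ is then precisely a colouring $c\colon\{1,\dots,s\}\to\{1,2,3\}$ with $\sum_{j:\,c(j)=\ell}a_{ij}=21$ for all $i$ and all colours $\ell$. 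I would solve this system of Diophantine equations by backtracking, integer linear programming, or lattice-point enumeration; for a well-chosen $G$ a solution exists and produces the large set, which is nontrivial since $N=3$ and $\lambda=21$ lies strictly between $0$ and the maximum $63$.

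\emph{The recursion.} Next I would establish that an $LS_q[N](t,k,n)$ gives rise to an $LS_q[N](t+1,k+1,n+1)$, an $\F_q$-analogue of a classical large-set recursion; the parameter $\lambda=\qbinom{n-t}{k-t}{q}/N$ is preserved, and the construction applies to the base example. Realize $\F_q^{n}$ as a hyperplane $H$ of $W=\F_q^{n+1}$ and encode the given large set as a colouring $c$ of the $k$-subspaces of $H$. A $(k+1)$-subspace $U$ of $W$ is either transverse to $H$, meeting it in a $k$-subspace, or contained in $H$. For the transverse ones the idea is to colour $U$ via $c(U\cap H)$: because the given colour classes are $t$-designs, the number of blocks of a fixed colour through a $(t+1)$-subspace $T$ of $W$ not contained in $H$ then equals the number of blocks of that colour of the given large set through the $t$-subspace $T\cap H$, which is exactly the required constant, and the global counts are governed by the $q$-Pascal identity $\qbinom{n+1}{k+1}{q}=\qbinom{n}{k+1}{q}+q^{n-k}\qbinom{n}{k}{q}$. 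What is genuinely delicate is that this rule is too coarse on the fibres of $U\mapsto U\cap H$, so the transverse subspaces must in fact be coloured by a fibre-balanced refinement of $c(U\cap H)$, and the $(k+1)$-subspaces lying inside $H$ must be coloured by a further large set compatibly matched with it; I would set this up by a strengthened induction carrying along a whole tower of mutually compatible large sets $LS_q[N](0,1,n),\dots,LS_q[N](t,t+1,n)$ and shifting it up one level at a time. Iterating from $LS_2[3](2,3,8)$ then produces nontrivial large sets of $t$-designs over $\F_2$ for every $t\ge2$, proving Theorem~\ref{thm:main}.

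\emph{Where the difficulty lies.} Two steps carry the real weight. Computationally: even with a prescribed group the Diophantine system is large, a solution need not exist for the first group one tries, and the choice of $G$ together with an effective solver is essential --- this is the bulk of the work for the base case. Structurally: the naive colouring $c(U\cap H)$ disposes cleanly only of the $(k+1)$-subspaces transverse to $H$ and relative to $(t+1)$-subspaces not lying in $H$; for the remaining subspaces one needs the fibre-balanced refinement and the compatible tower of auxiliary large sets, and verifying --- once more by Gaussian-binomial identities --- that every design equation of the new large set is then satisfied is the heart of the argument.
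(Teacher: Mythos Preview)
Your base case is the paper's entire proof. Read in context (``Up to now, no large sets \dots with $t\ge 2$ have been reported''), the theorem asserts only that \emph{some} nontrivial large set exists in the regime $t\ge 2$, and the paper establishes this by constructing the single example $LS_2[3](2,3,8)$ via the Kramer--Mesner method under the cyclic Singer group of order $255$ --- precisely what you outline. Nothing further is required; your reading ``for every $t\ge 2$'' is stronger than what is claimed or proved.

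The recursion you append is not in the paper and carries a genuine gap. You correctly note that the naive rule $U\mapsto c(U\cap H)$ handles only those $(t+1)$-subspaces $T\not\subseteq H$, and you propose to repair the remaining cases via an undefined ``fibre-balanced refinement'' together with an auxiliary tower $LS_q[N](0,1,n),\dots,LS_q[N](t,t+1,n)$. But for your own base parameters $(q,N,n)=(2,3,8)$ this tower cannot exist: the necessary condition $N\mid\qbinom{n-1}{1}{q}$ for the member $LS_2[3](1,2,8)$ already fails, since $\qbinom{7}{1}{2}=127$ is not divisible by $3$. Thus the induction cannot even start from your base example, and its very first output --- an $LS_2[3](3,4,9)$ --- would itself be a result beyond anything known. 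Even in the classical set case, recursions that raise $t$ require more input than a single large set; no $q$-analogue of such a recursion has been established, and your sketch does not supply one.
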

The theorem is proved by showing the existence of a large set 
consisting of three disjoint $2\text{-}(8,3,21;2)$ designs.


\section{The Construction of Large Sets}
\label{sec:construction}

Let $\subspaces{V}{k}$ denote the set of $k$-subspaces of $V$.
The expression 
\[
\qbinom{n}{k}{q}=\genfrac{}{}{}{}{(q^n-1)(q^{n-1}-1)\cdots(q^{n-k+1}-1)}{(q^k-1)(q^{k-1}-1)\cdots(q-1)}
\]
is called the $q$-\emph{binomial coefficient}.
The set $\subspaces{V}{k}$ itself is already a design, the so-called \emph{trivial} design,
with parameters $t\text{-}(n,k,\lambda_{\max};q)$,  where 
\[
\lambda_{\max}=\qbinom{n-t}{k-t}{q}.
\]
Hence, an obvious necessary condition for the existence of a $LS_q[N](t,k,n)$ large set is the equality 
$\lambda\cdot N = \lambda_{\max}$. 
Moreover, since the blocks of a $t$-design also form an $i$-design as long as $0\leq i\leq t$, 
we have the necessary conditions
\[
    N \mid \qbinom{n-i}{k-i}{q}\quad\mbox{for }  0\leq i\leq t.
\]

The general linear group $\GL(n,q)$, whose elements are represented by $n\times n$-matrices $\alpha$, 
acts on $\subspaces{V}{k}$ by left multiplication $\alpha K := \{\alpha x\mid x\in K\}$. 
An element $\alpha\in \GL(n,q)$ is called an \emph{automorphism} of a $t$-$(n,k,\lambda;q)$ design $\B$ 
if $\B=\alpha\B := \{\alpha K\mid K\in B\}$. The set of all automorphisms of a design forms a group, called \emph{the automorphism group} of the design. Every subgroup of the automorphism group of a design is denoted as \emph{a group of automorphisms} of the design.


If $G$ is a subgroup of $\GL(n,q)$ the $G$-orbit on a $k$-subspace $K$ is denoted by 
$G(K):=\{\alpha K\mid \alpha \in G\}\subseteq \subspaces{V}{k}$. 
Now, a $t\text{-}(n,k,\lambda;q)$ design $\B$ admits a subgroup $G$ of the general linear $\GL(n,q)$ 
as a group of automorphisms if and only if $\B$ consists of $G$-orbits on $\subspaces{V}{k}$. 
The $G$-incidence matrix $A_{t,k}^G$ is defined to be the matrix whose rows and columns are 
indexed by the $G$-orbits on the set of $t$- and $k$-subspaces of $V$, respectively. 
The entry indexed by the orbit $G(T)$ on $\subspaces{V}{t}$ and the orbit $G(K)$ on $\subspaces{V}{k}$ 
is defined by $|\{K'\in G(K)\mid T\subseteq K'\}|$.

According to Kramer and Mesner \cite{KM76} a simple $t\text{-}(n,k,\lambda;q)$ design admitting $G$ 
as a group of automorphisms exists if and only if there 
is a $0/1$-column vector $\textbf{x}$ satisfying $A_{t,k}^G\textbf{x}=\lambda\textbf{1}$, where $\textbf{1}$ denotes the all-one column vector.
The vector $\textbf{x}$ represents the corresponding selection of $G$-orbits on $\subspaces{V}{k}$.

The following algorithm describes a basic approach to find large sets. 
A version of this algorithm for large sets of designs on sets can be found in \cite{LMW01,LOT+07}.

\paragraph{Algorithm A.} The algorithm computes an $LS_q[N](t,k,n)$ large set $\LS$ consisting of $N$ $t\text{-}(n,k,\lambda;q)$ designs 
admitting $G$ as a group of automorphisms. Either the algorithm terminates with a large set or it ends without any statement about the existence.
\begin{enumerate}
\item[\textbf{A1}.] [\emph{Initialize}.] Set $\textbf{B}$ as the complete set of $G$-orbits on $\subspaces{V}{k}$ and set $\LS:=\emptyset$.
\item[\textbf{A2}.] [\emph{Solve}.] Find a random $t\text{-}(n,k,\lambda;q)$ design $\B$ consisting of orbits of $\textbf{B}$. 
If such a $t$-design exists insert $\B$ into $\LS$ and continue with \textbf{A3}. Otherwise terminate without a large set.
\item[\textbf{A3}.] [\emph{Remove}.] Remove the selected orbits in $\B$ from $\textbf{B}$. If $\textbf{B}=\emptyset$ then terminate 
with a large set $\LS$. Otherwise goto \textbf{A2}.
\end{enumerate}
Algorithm A can be implemented by a slight modification of the Kramer-Mesner approach. 
We just have to add a further row to the Diophantine system of equations the following way:
\[
\left[
\begin{array}{c}
\\
A_{t,k}^G\\
\\[1ex]
\hline
\cdots y_K \cdots
\end{array}\right]\textbf{x}=
\left[
\begin{array}{c}
\lambda\\
\vdots\\
\lambda\\
\hline
0
\end{array}
\right]
\]
The vector $\textbf{y}=[\cdots y_K \cdots]$ is indexed by the $G$-orbits on $\subspaces{V}{k}$ corresponding to the columns of $A_{t,k}^G$. The entry $y_K$ indexed by the $G$-orbit containing $K$ is defined to be one if the orbit has already been covered by a selected $t\text{-}(n,k,\lambda;q)$ design. Otherwise it is zero. In every iteration step the vector $\textbf{y}$ has to be updated.

A second simple approach which might be reasonable if the number of total solutions of the Kramer-Mesner system is small uses an exact cover solver \cite{Knu00}.

\paragraph{Algorithm B.} The algorithm computes an $LS_q[N](t,k,n)$ large set $\LS$ consisting of 
$N$ $t\text{-}(n,k,\lambda;q)$ designs admitting $G$ as a group of automorphisms. 
The algorithm terminates with the existence statement \texttt{true} or \texttt{false}.

\begin{enumerate}
\item[\textbf{B1}.] [\emph{Initialize}.] Find all $0/1$-column vectors $\textbf{x}_1,\ldots,\textbf{x}_s$ solving $A_{t,k}^G\textbf{x}=\lambda\textbf{1}$ and form the matrix
$A=[\textbf{x}_1\mid\cdots\mid \textbf{x}_s]$.

\item[\textbf{B2}.] [\emph{Exact cover}.] Find a $0/1$-vector $\textbf{y}$ solving the system $A\textbf{y}=\textbf{1}$. If such a solution $\textbf{y}$ exists return \texttt{true}. Otherwise return \texttt{false}.
\end{enumerate}


\section{The Existence of $LS_2[3](2,3,8)$}
\label{sec:existence}

In this section we present the construction of the first nontrival large set of designs over finite fields, 
a large set $\LS$ with parameters $LS_2[3](2,3,8)$. The large set consists of
three $2\text{-}(8,3,21;2)$ designs $\LS=\{\B_1, \B_2, \B_3\}$, each admitting $G=\langle\alpha\rangle\le \GL(8,2)$ 
as a group of automorphisms. The group $G$ has order $255$ and is generated by a Singer cycle $\alpha$, represented by the matrix
\[
\alpha=
\begin{bmatrix}
0& 0& 0& 0& 0& 0& 0& 1 \\
1& 0& 0& 0& 0& 0& 0& 0 \\
0& 1& 0& 0& 0& 0& 0& 1 \\
0& 0& 1& 0& 0& 0& 0& 1 \\
0& 0& 0& 1& 0& 0& 0& 1 \\
0& 0& 0& 0& 1& 0& 0& 0 \\
0& 0& 0& 0& 0& 1& 0& 0 \\
0& 0& 0& 0& 0& 0& 1& 0 \\
\end{bmatrix} \, .
\]
The existence of  $2$-$(8,3,21;2)$ designs having $G$ as a group of automorphisms has been shown previously in \cite{BKL05}.
The large set $\LS$ was constructed with Algorithm A. 
Each of the three designs of $\LS$ consists of $127$ orbits of $G$ on the set of $3$-subspaces of $V=\F_2^8$.
The orbit representatives for each of the designs $\B_1$, $\B_2$, and $\B_3$ are depicted in Tables \ref{tab:B1}, \ref{tab:B2}, and \ref{tab:B3}. 
For each representative, the three column vectors 
\[
\begin{bmatrix}
x_0 & y_0 & z_0\\
x_1 & y_1 & z_1\\
\vdots &\vdots&\vdots\\
x_{7} & y_{7} & z_{7}\\
\end{bmatrix}
\]
spanning a $3$-subspace of $V$,
are encoded as a triple of the positive integers
\[
[X,Y,Z]=\left[\sum_{i=0}^{7}x_i2^i,\sum_{i=0}^{7}y_i2^i, \sum_{i=0}^{7}z_i2^i\right].
\]

\begin{table}[!htbp]
\small
{\centering
\caption{Design $\B_1$}\label{tab:B1}}
\begin{multicols}{5}
$[1,112,128]$
$[1,48,128]$
$[2,80,128]$
$[2,96,128]$
$[3,48,128]$
$[3,64,128]$
$[4,32,128]$
$[4,48,128]$
$[4,72,128]$
$[5,72,128]$
$[5,80,128]$
$[6,32,128]$
$[6,72,128]$
$[6,96,128]$
$[7,32,128]$
$[7,48,128]$
$[7,80,128]$
$[8,64,128]$
$[9,64,128]$
$[9,96,128]$
$[10,32,128]$
$[10,64,128]$
$[12,96,128]$
$[13,32,128]$
$[13,64,128]$
$[14,32,128]$
$[14,64,128]$
$[15,16,128]$
$[15,96,128]$
$[16,64,128]$
$[17,64,128]$
$[17,96,128]$
$[18,32,128]$
$[19,8,128]$
$[19,96,128]$
$[20,96,128]$
$[21,24,128]$
$[21,32,128]$
$[21,96,128]$
$[23,8,128]$
$[25,32,128]$
$[33,48,128]$
$[33,64,128]$
$[33,80,128]$
$[34,64,128]$
$[35,4,128]$
$[35,48,128]$
$[36,64,128]$
$[37,40,128]$
$[37,64,128]$
$[38,16,128]$
$[38,112,128]$
$[40,112,128]$
$[41,112,128]$
$[41,64,128]$
$[42,48,128]$
$[43,80,128]$
$[44,16,128]$
$[45,80,128]$
$[47,112,128]$
$[49,40,128]$
$[49,64,128]$
$[50,64,128]$
$[50,84,128]$
$[52,40,128]$
$[55,56,128]$
$[55,64,128]$
$[56,64,128]$
$[57,4,128]$
$[60,64,128]$
$[61,64,128]$
$[62,64,128]$
$[63,64,128]$
$[65,32,128]$
$[66,80,128]$
$[67,48,128]$
$[67,96,128]$
$[69,16,128]$
$[69,80,128]$
$[70,8,128]$
$[70,32,128]$
$[70,96,128]$
$[71,16,128]$
$[71,48,128]$
$[72,80,128]$
$[73,80,128]$
$[74,16,128]$
$[75,80,128]$
$[77,96,128]$
$[78,32,128]$
$[79,32,128]$
$[79,48,128]$
$[79,112,128]$
$[83,8,128]$
$[83,96,128]$
$[84,8,128]$
$[84,96,128]$
$[85,24,128]$
$[85,32,128]$
$[85,96,128]$
$[86,32,128]$
$[87,32,128]$
$[89,96,128]$
$[90,32,128]$
$[92,32,128]$
$[94,32,128]$
$[94,96,128]$
$[98,8,128]$
$[98,16,128]$
$[99,16,128]$
$[99,36,128]$
$[100,8,128]$
$[100,24,128]$
$[100,40,128]$
$[101,16,128]$
$[101,112,128]$
$[102,80,128]$
$[103,40,128]$
$[103,48,128]$
$[103,72,128]$
$[105,112,128]$
$[106,112,128]$
$[110,16,128]$
$[110,48,128]$
$[114,40,128]$
$[114,120,128]$
$[122,4,128]$
\end{multicols}
\end{table}

\begin{table}[!htbp]
\small
{\centering
\caption{Design $\B_2$}\label{tab:B2}}
\begin{multicols}{5}
$[1,24,128]$
$[1,32,128]$
$[1,64,128]$
$[1,92,128]$
$[2,32,128]$
$[2,64,128]$
$[3,16,128]$
$[3,32,128]$
$[3,68,128]$
$[3,80,128]$
$[3,96,128]$
$[5,32,128]$
$[5,64,128]$
$[7,96,128]$
$[8,48,128]$
$[9,16,128]$
$[10,96,128]$
$[12,32,128]$
$[12,64,128]$
$[14,48,128]$
$[14,80,128]$
$[15,32,128]$
$[15,48,128]$
$[15,112,128]$
$[18,64,128]$
$[19,72,128]$
$[20,32,128]$
$[20,64,128]$
$[20,72,128]$
$[20,120,128]$
$[22,32,128]$
$[22,64,128]$
$[23,64,128]$
$[25,64,128]$
$[26,96,128]$
$[27,32,128]$
$[27,64,128]$
$[27,96,128]$
$[28,64,128]$
$[28,96,128]$
$[30,64,128]$
$[31,32,128]$
$[31,96,128]$
$[32,64,128]$
$[34,20,128]$
$[34,80,128]$
$[35,8,128]$
$[35,16,128]$
$[35,36,128]$
$[37,8,128]$
$[37,72,128]$
$[37,112,128]$
$[38,64,128]$
$[38,72,128]$
$[38,80,128]$
$[39,80,128]$
$[39,120,128]$
$[40,16,128]$
$[41,16,128]$
$[41,48,128]$
$[42,64,128]$
$[43,64,128]$
$[44,80,128]$
$[46,16,128]$
$[46,64,128]$
$[47,16,128]$
$[47,80,128]$
$[51,64,128]$
$[52,64,128]$
$[52,72,128]$
$[54,8,128]$
$[55,24,128]$
$[65,24,128]$
$[65,112,128]$
$[66,16,128]$
$[66,48,128]$
$[66,112,128]$
$[67,80,128]$
$[67,112,128]$
$[68,16,128]$
$[68,96,128]$
$[69,24,128]$
$[69,32,128]$
$[69,40,128]$
$[69,96,128]$
$[70,48,128]$
$[71,32,128]$
$[71,72,128]$
$[71,80,128]$
$[71,112,128]$
$[72,32,128]$
$[72,96,128]$
$[73,96,128]$
$[74,32,128]$
$[74,96,128]$
$[76,32,128]$
$[76,48,128]$
$[77,32,128]$
$[78,96,128]$
$[82,32,128]$
$[82,56,128]$
$[83,32,128]$
$[84,32,128]$
$[85,40,128]$
$[86,96,128]$
$[87,8,128]$
$[87,24,128]$
$[87,96,128]$
$[91,32,128]$
$[95,96,128]$
$[98,48,128]$
$[98,80,128]$
$[99,48,128]$
$[99,80,128]$
$[100,16,128]$
$[102,16,128]$
$[102,40,128]$
$[103,16,128]$
$[108,48,128]$
$[109,16,128]$
$[109,48,128]$
$[114,4,128]$
$[114,36,128]$
$[115,40,128]$
$[115,72,128]$
$[117,120,128]$
$[118,120,128]$
\end{multicols}
\end{table}

\begin{table}[!htbp]
\small
{\centering
\caption{Design $\B_3$}\label{tab:B3}}
\begin{multicols}{5}
$[1,16,128]$
$[2,120,128]$
$[2,36,128]$
$[2,52,128]$
$[3,40,128]$
$[4,56,128]$
$[4,64,128]$
$[4,96,128]$
$[5,16,128]$
$[5,96,128]$
$[5,112,128]$
$[6,56,128]$
$[6,64,128]$
$[7,16,128]$
$[7,64,128]$
$[7,112,128]$
$[8,32,128]$
$[8,80,128]$
$[8,96,128]$
$[9,32,128]$
$[9,48,128]$
$[9,80,128]$
$[10,16,128]$
$[13,48,128]$
$[13,96,128]$
$[14,16,128]$
$[14,96,128]$
$[15,64,128]$
$[15,80,128]$
$[17,32,128]$
$[19,32,128]$
$[19,64,128]$
$[19,68,128]$
$[21,64,128]$
$[23,32,128]$
$[23,96,128]$
$[24,32,128]$
$[24,64,128]$
$[26,64,128]$
$[27,4,128]$
$[28,32,128]$
$[30,32,128]$
$[30,96,128]$
$[31,64,128]$
$[33,40,128]$
$[33,72,128]$
$[34,48,128]$
$[34,72,128]$
$[34,100,128]$
$[35,64,128]$
$[35,72,128]$
$[35,80,128]$
$[36,120,128]$
$[37,24,128]$
$[37,80,128]$
$[39,8,128]$
$[39,16,128]$
$[39,48,128]$
$[39,64,128]$
$[40,64,128]$
$[42,80,128]$
$[43,16,128]$
$[44,64,128]$
$[45,64,128]$
$[47,64,128]$
$[48,64,128]$
$[49,8,128]$
$[53,64,128]$
$[54,64,128]$
$[55,72,128]$
$[57,64,128]$
$[58,64,128]$
$[59,64,128]$
$[65,16,128]$
$[65,48,128]$
$[65,80,128]$
$[65,96,128]$
$[66,32,128]$
$[66,96,128]$
$[67,16,128]$
$[67,32,128]$
$[68,32,128]$
$[69,48,128]$
$[69,120,128]$
$[70,16,128]$
$[71,8,128]$
$[71,56,128]$
$[72,48,128]$
$[73,16,128]$
$[73,32,128]$
$[76,16,128]$
$[76,80,128]$
$[76,96,128]$
$[77,112,128]$
$[78,16,128]$
$[79,16,128]$
$[79,96,128]$
$[81,32,128]$
$[81,96,128]$
$[82,8,128]$
$[82,96,128]$
$[83,40,128]$
$[86,24,128]$
$[87,40,128]$
$[87,72,128]$
$[88,32,128]$
$[91,96,128]$
$[92,96,128]$
$[95,32,128]$
$[99,112,128]$
$[100,112,128]$
$[100,120,128]$
$[101,24,128]$
$[101,8,128]$
$[101,48,128]$
$[102,56,128]$
$[104,16,128]$
$[104,112,128]$
$[107,112,128]$
$[107,48,128]$
$[110,80,128]$
$[111,16,128]$
$[114,8,128]$
$[114,52,128]$
$[114,84,128]$
$[118,8,128]$
$[118,24,128]$
\end{multicols}
\end{table}


\section{Further Results}
\label{sec:furtherresults}

Let $K^\perp=\{x\in V\mid \langle x,y\rangle = 0 \, \mbox{ for all } y\in K\}$ denote the orthogonal complement of a subspace $K$ of $V$ with respect to the standard inner product $\langle-,-\rangle$. By Suzuki \cite[Lemma 4.3]{Suz89} we know that every $t\text{-}(n,k,\lambda;q)$ design $\B$ defines a $t\text{-}(n,n-k,\lambda^\perp;q)$ design $\B^\perp:=\{K^\perp\mid K\in \B\}$ with 
\[
\lambda^\perp=\lambda\frac{\qbinom{n-t}{k}{q}}{\qbinom{n-t}{k-t}{q}}.
\]
This design is called the \emph{complementary design}. 
If a $t$-design admits $G$ as a group of automorphisms, which group does the complementary $t$-design admit? 

The orthogonal complement of a subspace corresponds to the set-wise complement of a subset in the classical situation for designs on sets.
There, it is clear that the automorphism group which is a subgroup of the symmetric group remains the same, since set-wise complements commute with permutations: $\overline{\pi K}=\pi \overline{K}$. 

For designs over finite fields the following lemma gives the answer.

\begin{lem}\label{lemma1}
A $t\text{-}(n,k,\lambda;q)$ design $\B$ admits $G\le \GL(n,q)$ as a group of automorphisms if and only if the complementary $t\text{-}(n,n-k,\lambda^\perp;q)$ design $\B^\perp$ admits $H=\{\alpha^T\mid \alpha\in G\}$ as a group of automorphisms.
\end{lem}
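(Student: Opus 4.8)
The plan is to unwind the definitions on both sides and reduce everything to a single linear-algebraic identity about the action of $\GL(n,q)$ and its transpose on subspaces. The central fact I would establish first is the compatibility between taking orthogonal complements and applying a matrix: for any $\alpha \in \GL(n,q)$ and any subspace $K \le V$,
\[
(\alpha K)^\perp = (\alpha^T)^{-1} (K^\perp).
\]
This follows directly from the definition of the standard inner product: $\langle \alpha x, y\rangle = \langle x, \alpha^T y\rangle$, so $y \perp \alpha x$ for all $x \in K$ exactly when $\alpha^T y \perp x$ for all $x \in K$, i.e. $y \in (\alpha K)^\perp$ iff $\alpha^T y \in K^\perp$ iff $y \in (\alpha^T)^{-1}(K^\perp)$. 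I would state and prove this as the opening step, since it is the only nontrivial ingredient; everything afterward is bookkeeping.

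Next I would use this identity to translate the automorphism condition. Suppose $\B$ admits $G$ as a group of automorphisms, so $\alpha \B = \B$ for every $\alpha \in G$. Applying the complement operation blockwise and using the identity above, $\B^\perp = (\alpha \B)^\perp = \{(\alpha K)^\perp \mid K \in \B\} = \{(\alpha^T)^{-1}(K^\perp) \mid K \in \B\} = (\alpha^T)^{-1}\B^\perp$. Hence $(\alpha^T)^{-1}$, and therefore also $\alpha^T$, is an automorphism of $\B^\perp$. Since this holds for all $\alpha \in G$ and since $H = \{\alpha^T \mid \alpha \in G\}$ is a subgroup of $\GL(n,q)$ (the transpose map is an anti-automorphism of $\GL(n,q)$, but the image of a subgroup is still a subgroup, and it is closed under inverses because $(\alpha^{-1})^T = (\alpha^T)^{-1}$), we conclude that $\B^\perp$ admits $H$ as a group of automorphisms. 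For the converse, I would observe that the construction is involutory: $(\B^\perp)^\perp = \B$ because $(K^\perp)^\perp = K$ for subspaces of a nondegenerate form, and $\{(\alpha^T)^T \mid \alpha^T \in H\} = G$. So applying the already-proved direction to $\B^\perp$ and $H$ yields the automorphism statement for $\B = (\B^\perp)^\perp$ and $G$, completing the equivalence.

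I do not anticipate a serious obstacle here; the proof is essentially a one-line identity plus routine verification that transposition interacts well with subgroups and inverses. The only point requiring a small amount of care is making sure the group $H$ is genuinely a \emph{group} of automorphisms and not merely a set of automorphisms — one must check closure under multiplication. Because transposition reverses products, $\alpha^T \beta^T = (\beta\alpha)^T$, and $\beta\alpha \in G$ since $G$ is a group, so $\alpha^T\beta^T \in H$; similarly $H$ contains the identity and is closed under inverses. A second minor subtlety worth a remark is that the complement operation sends $k$-subspaces to $(n-k)$-subspaces and that the parameter $\lambda^\perp$ appearing here is exactly the one supplied by Suzuki's lemma quoted just before the statement, so no separate verification of the design parameters is needed — that part is imported. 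With the key identity in hand, the write-up should be short.
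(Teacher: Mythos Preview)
Your proposal is correct and follows essentially the same approach as the paper: both hinge on the identity $(\alpha K)^\perp = (\alpha^T)^{-1}(K^\perp)$, after which the automorphism statement is immediate. The paper phrases the conclusion via the group isomorphism $\alpha \mapsto (\alpha^T)^{-1}$ from $G$ to $H$ and the induced bijection of orbits, while you argue directly on the block set and handle the converse by involutivity; these are cosmetic differences only.
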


\begin{proof} 
We have
\begin{align*}
(\alpha K)^\perp
=&\{x\in V \mid \langle x, \alpha y\rangle = 0\,\forall y\in K\}\\
=&\{x\in V \mid \langle \alpha^T x, y\rangle = 0\,\forall y\in K\}\\
=&\{(\alpha^T)^{-1}x \mid x\in V: \langle x, y\rangle = 0\, \mbox{ for all } y\in K\}\\
=& \{(\alpha^T)^{-1}x \mid x\in K^\perp\}\\
=& (\alpha^T)^{-1}(K^\perp).
\end{align*}
Since the mapping $\alpha\mapsto (\alpha^T)^{-1}$ defines a group isomorphism between $G$ and $H$ the orthogonal complement maps orbits $G(K)$ of $\subspaces{V}{k}$ onto orbits $H(K^\perp)$ of $\subspaces{V}{n-k}$. This completes the proof.
\end{proof}

The orthogonal complement defines a bijection between the set of $k$- and $(n-k)$-subspaces, and hence a partition of $\subspaces{V}{k}$ into $t\text{-}(n,k,\lambda;q)$ designs yields a partition of $\subspaces{V}{n-k}$ into $t\text{-}(n,n-k,\lambda^\perp;q)$ designs. Finally, the existence of an $LS_q[N](t,k,n)$ large set implies the existence of an $LS_q[N](t,n-k,n)$ large set.

Taking the orthogonal complements of each orbit representative of the $2\text{-}(8,3,21;2)$ designs given in the Tables \ref{tab:B1}, \ref{tab:B2}, and \ref{tab:B3}, we obtain representatives of disjoint $2\text{-}(8,5,21;2)$ designs forming an $LS_2[3](2,5,8)$ large set, where each design is 
admitting a Singer cyclic group as a group of automorphisms by Lemma~\ref{lemma1}.


%
\section*{Acknowledgments}
The collaboration of the authors has been partly funded by DFG KO 3154/9-1  and by the European COST project IC1104.
The research of the third author was supported in part by the Academy
 of Finland under Grant No.\ 132122.


\end{document}